\newcommand{\f}{\frac}
\newcommand{\ds}{\displaystyle}
\newtheorem{thm}{Theorem}[section]
\newtheorem{cor}[thm]{Corollary}
\newtheorem{lem}[thm]{Lemma}
\newtheorem{prop}[thm]{Proposition}
\theoremstyle{definition}
\theoremstyle{remark}
\numberwithin{equation}{section}
\begin{document}

\title[$2$-capability and $2$- nilpotent multiplier of finite dimensional nilpotent Lie algebras]{$2$-capability and $2$-nilpotent multiplier of finite dimensional nilpotent Lie algebras}
\author[P. Niroomand]{Peyman Niroomand}

\address{School of Mathematics and Computer Science\\
Damghan University, Damghan, Iran}
\email{niroomand@du.ac.ir, p$\_$niroomand@yahoo.com}

\author[M. Parvizi]{Mohsen Parvizi}
\address{Department of Pure Mathematics, Ferdowsi University of Mashhad, Mashhad, Iran.}
\email{parvizi@math.um.ac.ir}
\thanks{\textit{Mathematics Subject Classification 2010.} Primary 17B30 Secondary 17B60, 17B99.}

%\subjclass{Primary ; Secondary}

\keywords{2-nilpotent multiplier; Schur multiplier; Heisenberg algebras; derived subalgebra; 2-capable Lie algebra, }

\date{\today}

%\dedicatory{}

\begin{abstract} In the present context, we investigate to obtain some more results about $2$-nilpotent multiplier $\mathcal{M}^{(2)}(L)$ of a finite dimensional nilpotent Lie algebra $L$. For instance, we characterize the structure of $\mathcal{M}^{(2)}(H)$ when $H$ is a Heisenberg Lie algebra. Moreover, we give some inequalities on $ \mathrm{dim}~ \mathcal{M}^{(2)}(L)$ to reduce a well known upper bound on $2$-nilpotent multiplier as much as possible. Finally, we show that $H(m)$ is 2-capable if and only if m=1.

\end{abstract}

%%% ----------------------------------------------------------------------
\maketitle
%%% ----------------------------------------------------------------------

\section{Introduction}

For a finite group $G$, let $G$ be the quotient of a free group $F$ by a normal
subgroup $R$, then the $c$-nilpotent multiplier $\mathcal{M}^{(c)}(G)$ is defined as
\[R\cap\gamma_{c+1}(F)/\gamma_{c+1}[R,F],\]
in which $\gamma_{c+1}[R,F]=[\gamma_{c}[R,F],F]$ for $c\geq 1$. It is an especial case of the Baer invariant \cite{ba} with respect to the variety of nilpotent
groups of class at most $c$. When $c=1$, the abelian group $\mathcal{M}(G)=\mathcal{M}^{(1)}(G)$ is more known as the Schur multiplier of $G$ and it is much more studied, for instance in \cite{ka, ni1, ni9}.

Since determining the $c$-nilpotent multiplier of groups can be used for the classification of group into
isoclinism classes$($see \cite{bay}$)$, there are multiple papers concerning this subject.

Recently, several authors investigated to develop some results on the group theory case to Lie algebra.
In \cite{sal}, analogues to the $c$-nilpotent multiplier of groups, for a given Lie algebra $L$, the $c$-nilpotent multiplier of $L$ is defined as
\[\mathcal{M}^{(c)}(L)=R\cap F^{c+1}/[R,F]^{c+1},\]
in which $L$ presented as the quotient of a free Lie algebra $F$ by an ideal $R$, $F^{c+1}=\gamma_{c+1} (F)$ and $[R,F]^{c+1}=\gamma_{c+1}[R,F]$.
Similarly, for the case $c=1$, the abelian Lie algebra $\mathcal{M}(L)=\mathcal{M}^{(1)}(L)$ is more studied by the first author and the others
$($see for instance \cite{es, es2, bos, el2, har1, har, ni7, ni5, ni6, yan}$)$.

The $c$-nilpotent multiplier of a finite dimensional nilpotent
Lie algebra $L$ is a new field of interest in literature.
The present context is involving the $2$-nilpotent multiplier of a finite dimensional nilpotent Lie algebra $L$.
The aim of the current paper is divided into several steps.
In \cite[Corollary 2.8]{sal}, by a parallel result to the group theory result, showed for every finite nilpotent Lie algebra $L$, we have
\begin{equation}\label{e1}
\mathrm{dim}(\mathcal{M}^{(2)}(L))+\mathrm{dim}(L^3)\leq\f{1}{3}n(n-1)(n+1).
\end{equation}
Here we prove that abelian Lie algebras just attain the bound \ref{e1}. It shows that always $\mathrm{Ker}~\theta=0$ in \cite[Corollary 2.8 (ii)a]{sal}.

Since Heisenberg algebras $H(m)$ $($a Lie algebra of dimension $2m+1$ with $L^2=Z(L)$ and
$\mathrm{dim}~(L^2) = 1)$ have interest in several areas of Lie algebra, similar to the result of
\cite[Example 3]{es2} and \cite[Theorem 24]{mo}, by a quite different way, we give explicit structure of $2$-nilpotent multiplier of these algebras.
Among the other results since the Lie algebra which attained the upper bound \ref{e1} completely described in Lemma \ref{ab} $($they are just abelian Lie algebras$)$, by obtaining some new inequalities on dimension $\mathcal{M}^{(2)}(L)$,
we reduce bound \ref{e1} for non abelian Lie algebras as much as possible.

Finally, among the class of Heisenberg algebras, we show that which of them is $2$-capable. It means which of them is isomorphic to
$H/Z_2(H)$ for a Lie algebra $H$. For more information about the capability of Lie algebras see \cite{ni9, sal1}. These generalized the recently results for the group theory case in \cite{ni10}.

\section{Further investigation on $2$-nilpotent multiplier of finite dimensional nilpotent Lie algebra }
The present section illustrates to obtain further results on $2$-nilpotent multiplier of finite dimensional nilpotent Lie algebra. At first we give basic definitions and known results for the seek of convenience the reader.

Let $F$ be a free Lie algebra on an arbitrary totaly ordered set $X$. Recall from \cite{sh},
the basic commutator on the set $X$, which is defined as follows, is a basis of $F$.

The elements of $X$ are basic commutators of length one and ordered relative to the total order previously chosen.
Suppose all the basic commutators $a_i$ of length less than $k \geq 1$
have been defined and ordered. Then the basic commutators of length $k$ to be defined as
all commutators of the form $[a_i , a_j ]$ such that the sum of lengths of $a_i$ and $a_j$ is $k$, $a_i > a_j$, and if
$a_i = [a_s, a_t ]$, then $a_j \geq a_t$.
Also the number of basic commutators on $X$ of length $n$, namely $l_d(n)$, is
\[\f{1}{n}\sum_{m|n}\mu(m)d^{\f{n}{m}},\] where $\mu$ is the M\"{o}bius function.

From \cite{el2}, let $F$ be a fixed field, $L, K$ be two Lie algebras and $[ \ , \ ]$ denote the Lie bracket. By an action of $L$
on $K$ we mean an $F$-bilinear map \[(l,k) \in L\times K \mapsto
~^lk \in K~\text{satisfying}~\]
\[^{[l,l']}k= ~^l(~^{l'}k)- ~^{l'}(~^lk)~\text{and}~^l[k,k']=[~^lk,k']+[k,~^lk'], ~\text{for all}~ c \in F, l, l' \in L, k, k' \in K.\]
When $L$ is a subalgebra of a Lie algebra $P$ and $K$ is an ideal in $P$, then $L$
acts on $K$ by Lie multiplications $~^lk=[l,k]$.
A crossed module is a Lie homomorphism
$\sigma: K\rightarrow L$ together with an action of $L$ on $K$ such that
\[\sigma(^lk) = [l,\sigma(k)]~\text{and}~ ^{\sigma(k)}k' = [k,k'] ~\text{for all}~k,k'\in K ~\text{and}~ l\in L.\]

Let $\sigma: L\rightarrow M $ and $\eta: K\rightarrow M$ be two crossed modules, $L$ and $K$ act on each other and on themselves by Lie. Then these actions are called compatible provided that
\[~^{~^kl}k'=~^{k'}(~^lk)~\text{and}~^{~^lk}l'=~^{l'}(~^kl).\]

The non-abelian tensor product $L\otimes K$ of $L$ and $K$ is
the Lie algebra generated by the symbols $l\otimes k$ with defining
relations
\[c(l \otimes k)=cl \otimes k = l \otimes ck,
(l+l')\otimes k = l \otimes k + l' \otimes k,\]
\[l \otimes (k+k') = l\otimes k + l \otimes k',
~^ll' \otimes k = l \otimes ~^{l'}k -l' \otimes ~^lk,~ l \otimes ~^kk'= ~^{k'}l \otimes k - ~^kl
\otimes k',\]
\[[l\otimes k, l' \otimes k']=- ~^kl \otimes ~^{l'}k',~\text{for all}~c \in F, l, l' \in L, k, k' \in K.\]

The non-abelian tensor square of $L$ is a special case of tensor product
$L\otimes K$ when $K=L$. Note that we denote the usual
abelian tensor product $L \otimes_\mathbb{Z} K$, when $L$ and $K$
are abelian and the actions are trivial.

Let $L\square K$ be the submodule of $L\otimes K$ generated by the elements $l\otimes k$ such that $\sigma(l)=\eta(k)$. The factor Lie algebra $L\wedge K\cong L\otimes K/L\square K$ is called the exterior product of $L$ and $K$, and the image of $l\otimes k$
is denoted by $l\wedge k$ for all $l\in L,k \in K$. Throughout the paper $\Gamma$ is denoted the universal quadratic functor $($see \cite{el2}$)$.

Recall from \cite{ni9}, the exterior centre of a Lie algebra $Z^{\wedge}(L)=\{l\in L~|~l\wedge l'=1_{L\wedge L},~\forall~l'\in L \}$ of $L$.
It is shown that in \cite{ni9} the exterior centre $L$ is a central ideal of $L$ which allows us to decide when
Lie algebra $L$ is capable, that is, whether $L\cong H/Z(H)$ for a Lie algebra $H$.

The following Lemma is a consequence of \cite[Lemma 3.1]{ni9}.
\begin{lem} \label{ca} Let $L$ be a finite dimensional Lie algebra, $L$ is capable if and only if $Z^{\wedge}(L)=0$.
\end{lem}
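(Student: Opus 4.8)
The plan is to route the equivalence through the \emph{epicentre} $Z^*(L)$, defined as the smallest ideal $N$ of $L$ with the property that the quotient $L/N$ is capable. Two facts about the epicentre are needed. First, the quotient $L/Z^*(L)$ is always capable; this follows from the defining minimality property together with the fact that, in the finite dimensional setting, the family of ideals $N$ with $L/N$ capable is closed under intersection, so a smallest element exists. Granting this, $L$ is capable precisely when $0$ itself belongs to this family, that is, exactly when $Z^*(L)=0$. Thus the whole statement reduces to identifying the epicentre with the exterior centre.

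The crux is to invoke \cite[Lemma 3.1]{ni9}, whose content (transported to the present finite dimensional setting) is the equality $Z^{\wedge}(L)=Z^*(L)$. Concretely, fixing a free presentation $L\cong F/R$ and forming the exterior square $L\wedge L$ together with the commutator map $L\wedge L\to L^2$, one shows that an element $l\in L$ satisfies $l\wedge l'=0$ for all $l'\in L$ if and only if $l$ lies in the image of $Z(L^*)$ under the natural projection $L^*\to L$ for a stem cover $L^*$ of $L$, and this image is exactly $Z^*(L)$. Combining the two facts yields the chain of equivalences
\[
L \text{ is capable} \iff Z^*(L)=0 \iff Z^{\wedge}(L)=0,
\]
which is the assertion of the lemma.

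It is instructive to see both directions concretely. For the forward direction, if $L\cong H/Z(H)$, then the projection $H\to L$ is a central extension realising $L$ as a quotient by its own centre, which forces every element of the exterior centre to vanish under the wedge pairing, giving $Z^{\wedge}(L)\subseteq Z^*(L)=0$. For the converse, assuming $Z^{\wedge}(L)=0$, one constructs the desired $H$ as a suitable stem extension of $L$ whose centre projects onto $Z^{\wedge}(L)=0$; here the finite dimensionality guarantees that such a cover exists and that the construction is well behaved.

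The main obstacle is the identification $Z^{\wedge}(L)=Z^*(L)$: verifying that wedging with all of $L$ detects exactly membership in the epicentre requires the non-abelian exterior square machinery and the compatibility of the commutator map with central extensions. Once \cite[Lemma 3.1]{ni9} supplies this equality, the remainder is the purely formal reduction to the vanishing of the epicentre.
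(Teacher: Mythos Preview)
Your proposal is correct and follows exactly the route the paper indicates: the paper gives no detailed proof but simply records the lemma as a consequence of \cite[Lemma 3.1]{ni9}, which is precisely the identification $Z^{\wedge}(L)=Z^*(L)$ that you invoke as the crux. Your write-up is a faithful unpacking of that citation, so there is nothing to add.
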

The next two lemmas are special cases of  \cite[Proposition 2.1 (i)]{sal} when $c=2$ and that is useful for proving the next theorem.
\begin{lem}\label{ln}Let $I$ be an ideal in a Lie algebra $L$. Then the following sequences are exact.
\begin{itemize}
\item[(i)]$\mathrm{Ker}(\mu_I^2)\rightarrow \mathcal{M}^{(2)}(L)\rightarrow\mathcal{M}^{(2)}(L/I)\rightarrow \f {I\cap L^3}{[[I,L],L]}\rightarrow 0.$
    \item[(ii)]$(I\wedge  L/L^3)\wedge L/L^3 \rightarrow \mathcal{M}^{(2)}(L)\rightarrow\mathcal{M}^{(2)}(L/I)\rightarrow  I\cap L^3\rightarrow 0,$ when $[[I,L],L]=0$.
\end{itemize}
\end{lem}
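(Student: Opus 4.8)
The plan is to obtain both sequences as specializations of the general exact sequence of \cite[Proposition 2.1(i)]{sal} to the class $c=2$, so that essentially no new argument is needed beyond translating the generic terms into the explicit ones claimed here. Recall that for an ideal $I$ of $L$ and any $c\geq 1$ that proposition supplies an exact sequence whose nonzero terms are $\mathrm{Ker}(\mu_I^c)$, the two $c$-nilpotent multipliers $\mathcal{M}^{(c)}(L)$ and $\mathcal{M}^{(c)}(L/I)$, and the quotient $\f{I\cap\gamma_{c+1}(L)}{\gamma_{c+1}[I,L]}$, where $\gamma_{c+1}[I,L]$ denotes $I$ bracketed with $L$ a total of $c$ times. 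All the work therefore lies in reading off these terms when $c=2$.

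First I would put $c=2$. Then $\gamma_{c+1}(L)=\gamma_3(L)=L^3$ and $\gamma_{c+1}[I,L]=\gamma_3[I,L]=[[I,L],L]$, so the right-hand portion of the sequence becomes $\mathcal{M}^{(2)}(L)\rightarrow\mathcal{M}^{(2)}(L/I)\rightarrow\f{I\cap L^3}{[[I,L],L]}\rightarrow 0$, while the leading arrow is $\mathrm{Ker}(\mu_I^2)\rightarrow\mathcal{M}^{(2)}(L)$. This is precisely statement (i), which is thus immediate.

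For (ii) I would add the hypothesis $[[I,L],L]=0$. The denominator of the final quotient then vanishes, so $\f{I\cap L^3}{[[I,L],L]}$ collapses to $I\cap L^3$; this already gives the last three terms and their exactness from (i). It remains to replace the leading term $\mathrm{Ker}(\mu_I^2)$ by the iterated exterior product $(I\wedge L/L^3)\wedge L/L^3$. The point is that under $[[I,L],L]=0$ the defining relations of the map $\mu_I^2$ in \cite{sal}, interpreted through the non-abelian tensor and exterior products of \cite{el2}, factor successively through $I\wedge L/L^3$ and then through $(-)\wedge L/L^3$, so that the natural map $(I\wedge L/L^3)\wedge L/L^3\rightarrow\mathcal{M}^{(2)}(L)$ has exactly the same image as $\mathrm{Ker}(\mu_I^2)\rightarrow\mathcal{M}^{(2)}(L)$, namely the kernel of $\mathcal{M}^{(2)}(L)\rightarrow\mathcal{M}^{(2)}(L/I)$.

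I expect this last identification to be the only delicate step: everything else is a mechanical substitution of $c=2$ into \cite[Proposition 2.1(i)]{sal}, whereas verifying that the vanishing of $[[I,L],L]$ makes the leading term factor through the two successive exterior products requires tracing the tensor relations of \cite{el2} and checking by hand the exactness of $(I\wedge L/L^3)\wedge L/L^3\rightarrow\mathcal{M}^{(2)}(L)$.
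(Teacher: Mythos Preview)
Your proposal is correct and matches the paper's approach exactly: the paper gives no separate proof of this lemma but simply declares it to be the special case $c=2$ of \cite[Proposition~2.1(i)]{sal}. Your additional remarks on why the leading term in (ii) may be rewritten as $(I\wedge L/L^3)\wedge L/L^3$ under the hypothesis $[[I,L],L]=0$ go slightly beyond what the paper spells out, but they are in the right spirit and do not deviate from the intended argument.
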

\begin{lem}\label{l3}Let $I$ be an ideal of $L$, and put $K=L/I$. Then
\begin{itemize}
\item[(i)] $\mathrm{dim}~\mathcal{M}^{(2)}(K)\leq \mathrm{dim}~\mathcal{M}^{(2)}(L)+ \ds\mathrm{dim}~\f{I\cap L^3}{[[I,L],L]}.$
\item[(ii)] Moreover, if $I$ is a $2$-central subalgebra. Then

$(a).$ $(I\wedge L)\wedge L\rightarrow \mathcal{M}^{(2)}(L)\rightarrow ~\mathcal{M}^{(2)}(K)\rightarrow \mathrm{dim}~I\cap L^3\rightarrow 0$.

$(b).$ $ \mathrm{dim}~\mathcal{M}^{(2)}(L)+\mathrm{dim}~I\cap L^3 \leq \mathrm{dim}~\mathcal{M}^{(2)}(K)+\mathrm{dim}~(I\otimes L/L^3)\otimes L/L^3$.
\end{itemize}\end{lem}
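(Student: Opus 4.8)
The plan is to derive all parts directly from the two exact sequences of Lemma \ref{ln} together with elementary dimension counting, so that no fresh manipulation of a free presentation is required. For part (i), I would start from the four-term exact sequence of Lemma \ref{ln}(i),
\[
\mathrm{Ker}(\mu_I^2)\rightarrow \mathcal{M}^{(2)}(L)\st{\beta}{\rightarrow}\mathcal{M}^{(2)}(K)\st{\gamma}{\rightarrow} \f{I\cap L^3}{[[I,L],L]}\rightarrow 0,
\]
where $K=L/I$. Exactness at the last two spots says that $\gamma$ is onto and that $\mathrm{im}\,\beta=\ker\gamma$, whence $\dim\mathcal{M}^{(2)}(K)=\dim\mathrm{im}\,\beta+\ds\dim\f{I\cap L^3}{[[I,L],L]}$. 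Since $\mathrm{im}\,\beta$ is a homomorphic image of $\mathcal{M}^{(2)}(L)$, we have $\dim\mathrm{im}\,\beta\leq\dim\mathcal{M}^{(2)}(L)$, and substituting gives the inequality of (i). No hypothesis on $I$ enters here.

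For (ii)(a), the assumption that $I$ is $2$-central means precisely that $[[I,L],L]=0$, so Lemma \ref{ln}(ii) applies and produces an exact sequence with source $(I\wedge L/L^3)\wedge L/L^3$ and terminal term $I\cap L^3$. To rewrite the source as $(I\wedge L)\wedge L$ I would use the surjection
\[
(I\wedge L)\wedge L\rightarrow(I\wedge L/L^3)\wedge L/L^3
\]
induced by the canonical projection $L\rightarrow L/L^3$, the exterior product being functorial in each variable. Precomposing the map of Lemma \ref{ln}(ii) with this surjection leaves the image in $\mathcal{M}^{(2)}(L)$ unchanged, so exactness at $\mathcal{M}^{(2)}(L)$ is preserved and (a) follows.

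For (ii)(b), write $\alpha\colon(I\wedge L)\wedge L\rightarrow\mathcal{M}^{(2)}(L)$, $\beta\colon\mathcal{M}^{(2)}(L)\rightarrow\mathcal{M}^{(2)}(K)$ and $\gamma\colon\mathcal{M}^{(2)}(K)\rightarrow I\cap L^3$ for the maps of (a). From $\mathrm{im}\,\alpha=\ker\beta$, $\mathrm{im}\,\beta=\ker\gamma$ and the surjectivity of $\gamma$, counting dimensions along the sequence yields
\[
\dim\mathcal{M}^{(2)}(L)+\dim(I\cap L^3)=\dim\mathrm{im}\,\alpha+\dim\mathcal{M}^{(2)}(K).
\]
It then remains to bound $\dim\mathrm{im}\,\alpha$ from above. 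Using that $\mathrm{im}\,\alpha$ is the image of $(I\wedge L/L^3)\wedge L/L^3$, that the exterior product is a quotient of the non-abelian tensor product, and that the tensor product sends surjections to surjections in each variable, one chains
\[
\dim\mathrm{im}\,\alpha\leq\dim\,(I\wedge L/L^3)\wedge L/L^3\leq\dim\,(I\otimes L/L^3)\otimes L/L^3,
\]
and combining with the displayed identity gives the inequality of (b).

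I expect the genuine obstacle to lie entirely in (ii): one must make precise the passage from $(I\wedge L)\wedge L$ to $(I\wedge L/L^3)\wedge L/L^3$ and the comparison of exterior with tensor products. Concretely, I would have to verify that the projection $L\rightarrow L/L^3$ really induces the claimed surjections of iterated exterior and tensor products together with their compatible actions, and that $-\otimes(L/L^3)$ is right exact in the non-abelian setting. Once these functorial facts are established, both (a) and (b) become formal consequences of Lemma \ref{ln} and the dimension bookkeeping above.
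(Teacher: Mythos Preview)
Your argument is correct and follows essentially the same route as the paper: everything is read off from Lemma~\ref{ln} together with the natural epimorphism from iterated tensor products onto iterated exterior products. The only minor variation is in (ii)(a): the paper invokes Lemma~\ref{ln}(i) and asserts directly that under the $2$-central hypothesis $\mathrm{Ker}\,\mu_I^2=(I\wedge L)\wedge L$, whereas you start from Lemma~\ref{ln}(ii) and precompose with the surjection $(I\wedge L)\wedge L\twoheadrightarrow(I\wedge L/L^3)\wedge L/L^3$; since precomposing an exact sequence by an epimorphism onto its first term preserves exactness, the two routes are equivalent and your dimension bookkeeping for (i) and (ii)(b) matches the paper's.
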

\begin{proof} $(i)$. Using Lemma \ref{ln} (i).

$(ii)(a)$. Since $[I,L]\subseteq Z(L)$, $\mathrm{Ker}~\mu^2_I=(I\wedge L)\wedge L$ and $[[I,L],L]=0$ by Lemma \ref{ln}. It follows the result.

$(ii)(b)$. Since there is a natural epimorphism $(I\otimes L/L^3)\otimes L/L^3\rightarrow (I\wedge L/L^3)\wedge L/L^3 $, the result deduces from
Lemma \ref{ln} (ii).
\end{proof}
The following theorem gives the explicit structure of the Schur multiplier of all Heisenberg algebra.
\begin{thm}\cite[Example 3]{es2} $\mathrm{and}$ \cite[Theorem 24]{mo}\label{h}
Let $H(m)$ be Heisenberg algebra of dimension $2m+1$. Then
\begin{itemize}
\item[(i)]$\mathcal{M}(H(1))\cong A(2)$.
\item[(ii)]$\mathcal{M}(H(m))=A(2m^2-m-1)$ for all $m\geq 2$.
\end{itemize}
\end{thm}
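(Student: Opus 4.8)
The plan is to exploit the well-known exact sequence of Lie algebras
$$0 \longrightarrow \mathcal{M}(L) \longrightarrow L\wedge L \xrightarrow{\kappa} L^2 \longrightarrow 0,$$
where $\kappa(l\wedge l')=[l,l']$, so that $\dim \mathcal{M}(L)=\dim(L\wedge L)-\dim L^2=\dim(L\wedge L)-1$ for $L=H(m)$. Since $\mathcal{M}(L)$ is always abelian (being a central sub-quotient $(R\cap F^2)/[R,F]$ with $[R,R]\subseteq[R,F]$ in any free presentation), it suffices to determine the single number $\dim(L\wedge L)$, after which the claimed isomorphisms with $A(\,\cdot\,)$ follow at once. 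Throughout I would fix the standard basis $x_1,\dots,x_m,y_1,\dots,y_m,z$ of $H(m)$ with $[x_i,y_i]=z$ and all other brackets of basis elements zero.

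For $m\geq 2$ I would first show that every \emph{central} generator of $L\wedge L$ vanishes. Writing $z=[x_i,y_i]$ and using the defining relation $[l,l']\wedge k = l\wedge [l',k]-l'\wedge[l,k]$ of the exterior product (read off from the tensor relations with the adjoint action ${}^{a}b=[a,b]$), one computes $z\wedge x_j = x_i\wedge[y_i,x_j]-y_i\wedge[x_i,x_j] = -\delta_{ij}\,(x_i\wedge z)$; choosing an index $i\neq j$, which is possible precisely because $m\geq 2$, gives $z\wedge x_j=0$, and symmetrically $z\wedge y_j=0$. Hence $L\wedge L$ is spanned by the $2m^2-m$ elements $x_i\wedge x_j,\ y_i\wedge y_j\ (i<j)$ together with $x_i\wedge y_j\ (\text{all }i,j)$, which yields $\dim(L\wedge L)\leq 2m^2-m$. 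For the reverse inequality I would use the epimorphism $L\wedge L\to (L/L^2)\wedge (L/L^2)$ induced by $L\to L/L^2\cong A(2m)$; its target is the ordinary exterior square of a $2m$-dimensional abelian Lie algebra, of dimension $\binom{2m}{2}=2m^2-m$. Thus $\dim(L\wedge L)=2m^2-m$ and $\dim\mathcal{M}(H(m))=2m^2-m-1$, i.e. $\mathcal{M}(H(m))\cong A(2m^2-m-1)$.

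The case $m=1$ is genuinely exceptional, since no index $i\neq j$ is available and the central wedges survive. Here I would instead argue directly from a free presentation: $H(1)$ is the free nilpotent Lie algebra of class $2$ on the two generators $x,y$, that is $H(1)\cong F/F^3$ with $F$ free of rank $2$. Then $R=F^3$, $R\cap F^2=F^3$ and $[R,F]=F^4$, so $\mathcal{M}(H(1))=F^3/F^4\cong A(l_2(3))$, and the basic-commutator count gives $l_2(3)=\f{1}{3}\bigl(\mu(1)2^3+\mu(3)2\bigr)=2$, whence $\mathcal{M}(H(1))\cong A(2)$.

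I expect the main obstacle to be establishing the two matching bounds on $\dim(L\wedge L)$ for $m\geq 2$: converting the relation-level identity $z\wedge x_j=-\delta_{ij}(x_i\wedge z)$ into a genuine spanning set (the upper bound) and justifying that the abelianization map is onto with exactly the stated target dimension (the lower bound). That identity is the delicate point, as it forces $m\geq 2$ and cleanly explains why $m=1$ must be treated separately: there the two extra independent generators $x\wedge z$ and $y\wedge z$ account for the jump from the generic value $2m^2-m-1=0$ up to $\dim\mathcal{M}(H(1))=2$.
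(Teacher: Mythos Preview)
The paper does not prove this theorem at all: it is quoted verbatim from \cite{es2} and \cite{mo} as background, so there is no in-paper argument to compare against. Your proposal is a correct, self-contained proof.

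For $m\geq 2$ the key identity $[l,l']\wedge k = l\wedge[l',k]-l'\wedge[l,k]$ does descend from the tensor relation $^{l}l'\otimes k = l\otimes{}^{l'}k - l'\otimes{}^{l}k$ given in the paper, and your computation $z\wedge x_j=-\delta_{ij}(x_i\wedge z)$ is right; choosing $i\neq j$ kills all wedges involving $z$, giving the spanning upper bound $\dim(L\wedge L)\leq 2m^2-m$. The matching lower bound via the epimorphism $L\wedge L\twoheadrightarrow (L/L^2)\wedge(L/L^2)\cong\Lambda^2 A(2m)$ of dimension $\binom{2m}{2}=2m^2-m$ is also valid, since for an abelian Lie algebra with trivial action the non-abelian exterior square reduces to the ordinary one. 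Subtracting $\dim L^2=1$ in the exact sequence yields part~(ii).

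Your treatment of $m=1$ via the free presentation $H(1)\cong F/F^3$ and the Witt count $l_2(3)=\tfrac{1}{3}(2^3-2)=2$ is correct and in fact mirrors exactly the method the paper itself uses later when computing $\mathcal{M}^{(2)}(H(1))$.
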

The following result comes from \cite[Theorem 2.8]{ni8} and shows the behavior of 2-nilpotent multiplier respect to the direct sum of two Lie algebras.
\begin{thm}\label{ds}
Let $A$ and $B$ be finite dimensional Lie algebras. Then
\[\begin{array}{lcl}\mathcal{M}^{(2)}(A\oplus B)) &\cong& \mathcal{M}^{(2)}(A)\oplus ~\mathcal{M}^{(2)}(B)
\oplus \big((A/A^2\otimes_{\mathbb{Z}} A/A^2)\otimes_{\mathbb{Z}} B/B^2\big )\vspace{.3cm}\\&\oplus&\big((B/B^2\otimes_{\mathbb{Z}} B/B^2)\otimes_{\mathbb{Z}} A/A^2\big).\end{array}\]
\end{thm}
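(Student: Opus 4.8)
The plan is to read off the four summands from a free presentation of $A\oplus B$, grading the free Lie algebra by how many generators come from each factor. Choose minimal free presentations $A\cong F_1/R_1$ and $B\cong F_2/R_2$ in which $F_1,F_2$ are free on totally ordered sets $X_1,X_2$ whose images are bases of $A/A^2$ and $B/B^2$; minimality gives $R_1\subseteq F_1^2$ and $R_2\subseteq F_2^2$. Let $F$ be the free Lie algebra on $X=X_1\sqcup X_2$ and let $T$ be the ideal generated by all $[x,y]$ with $x\in X_1,\ y\in X_2$, so that $F/T\cong F_1\oplus F_2$ and $A\oplus B\cong F/S$ with $S=R_1+R_2+T$. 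By the definition of the $2$-nilpotent multiplier for this presentation,
\[
\mathcal{M}^{(2)}(A\oplus B)=\frac{S\cap F^3}{[[S,F],F]}.
\]

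First I would split off $\mathcal{M}^{(2)}(A)\oplus\mathcal{M}^{(2)}(B)$ by functoriality rather than by hand. The inclusion $A\hookrightarrow A\oplus B$ and the retraction $A\oplus B\twoheadrightarrow A$ compose to $\mathrm{id}_A$; since $\mathcal{M}^{(2)}$ is a functor, the induced maps realize $\mathcal{M}^{(2)}(A)$ as a direct summand of $\mathcal{M}^{(2)}(A\oplus B)$, and likewise for $B$. Because the composite $B\hookrightarrow A\oplus B\twoheadrightarrow A$ is zero, the two summands meet trivially, so $\mathcal{M}^{(2)}(A)\oplus\mathcal{M}^{(2)}(B)$ splits off; on the presentation side these are the classes supported entirely in $F_1$, respectively $F_2$.

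It remains to identify the complement with the mixed part of the multiplier, the part carried by bihomogeneous pieces of $F^3$ whose $X_1$- and $X_2$-degrees are both positive. Every basic commutator of weight $3$ using generators from both $X_1$ and $X_2$ already lies in $T\subseteq S$, so the numerator contains the full bidegree-$(2,1)$ and $(1,2)$ components of $F^3$; on the other hand minimality makes $S$ meet the degree-one component of $F$ trivially, so $[[S,F],F]$ has no weight-$3$ component and the denominator vanishes on these bidegrees. I would then write down the linear map that records a weight-$3$ basic commutator with two entries $x_i,x_{i'}\in X_1$ and one entry $y_j\in X_2$ as the elementary tensor $\bar x_i\otimes_{\mathbb{Z}}\bar x_{i'}\otimes_{\mathbb{Z}}\bar y_j\in(A/A^2\otimes_{\mathbb{Z}} A/A^2)\otimes_{\mathbb{Z}} B/B^2$, check that the Jacobi identity and antisymmetry correspond exactly to the ordered-tensor relations (so that one obtains the tensor, not the exterior, product), and argue symmetrically for bidegree $(1,2)$ and $(B/B^2\otimes_{\mathbb{Z}} B/B^2)\otimes_{\mathbb{Z}} A/A^2$. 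A Witt-type count confirms the match: separating the cases of equal and distinct $X_1$-entries yields $ab+a(a-1)b=a^2b$ basic commutators of bidegree $(2,1)$, equal to $\dim\big((A/A^2\otimes_{\mathbb{Z}} A/A^2)\otimes_{\mathbb{Z}} B/B^2\big)$ with $a=\dim A/A^2$ and $b=\dim B/B^2$.

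The main obstacle is showing that the mixed contribution is genuinely confined to weight $3$, equivalently that the complement of $\mathcal{M}^{(2)}(A)\oplus\mathcal{M}^{(2)}(B)$ receives nothing from weight $\geq 4$. Concretely one must verify that every element of $S\cap F^3$ of weight at least $4$ and of positive degree in both $X_1$ and $X_2$ already lies in $[[S,F],F]$, so that the relations $R_1,R_2$ (which are supported in $F_1$ and $F_2$) and the commuting ideal $T$ produce no extra mixed classes beyond the elementary tensors. This higher-weight collapse, rather than the weight-$3$ identification, is where the delicate commutator calculus is needed; the rest is the presentation formula together with the bidegree grading.
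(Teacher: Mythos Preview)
The paper does not prove this theorem; it quotes it from \cite[Theorem~2.8]{ni8} (see the sentence introducing Theorem~\ref{ds}), so there is no in-paper argument to compare your proposal against. Your outline is in the spirit of how such direct-sum formulas are established in the literature (compare Moghaddam's group-theoretic prototype \cite{mog} and its Lie-algebraic version in \cite{ni8}): take compatible free presentations, split off the two ``pure'' summands by the retractions $A\oplus B\to A$ and $A\oplus B\to B$, and identify the mixed remainder via a basic-commutator/bidegree analysis. The weight-$3$ part of your argument is sound: with the ordering $X_2<X_1$ the bidegree-$(2,1)$ basic commutators are exactly the $[[x_i,y_j],x_k]$, there are $a^2b$ of them, they all lie in $T\subseteq S$, and $[[S,F],F]$ has weight $\ge 4$, so they inject into $\mathcal{M}^{(2)}(A\oplus B)$ and match $(A/A^2\otimes_{\mathbb Z}A/A^2)\otimes_{\mathbb Z}B/B^2$ on the nose.

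What is missing is exactly the step you flag as the ``main obstacle'', and without it the argument is genuinely incomplete. You must show that every mixed element of $S\cap F^3$ of weight $\ge 4$ already lies in $[[S,F],F]$. This is not automatic: for instance $[r,y]$ with $r\in R_1\subseteq F_1^2$ and $y\in X_2$ lies in $T\subseteq S\cap F^3$ and has positive $X_2$-degree, but a priori only in $[S,F]$, not in $[[S,F],F]$; one has to expand $r$ in basic commutators and use Jacobi to push everything into $[[T,F],F]$. The standard route (as in \cite{mog,ni8}) is a careful induction on weight showing that the mixed part of $S\cap F^3$ equals $T\cap F^3$ and that $T\cap F^{4}\subseteq[[T,F],F]\subseteq[[S,F],F]$; alternatively one can argue via an explicit $2$-stem cover of $A\oplus B$. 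Until that collapse is carried out, your proposal is a correct plan with the decisive lemma left unproved. A minor additional point: your sentence that ``Jacobi and antisymmetry correspond exactly to the ordered-tensor relations'' is misleading as stated; what actually gives the full tensor product is the explicit bijection $[[x_i,y_j],x_k]\mapsto (\bar x_i\otimes\bar x_k)\otimes\bar y_j$ between Hall bases, not any intrinsic relation-matching.
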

The following theorem is proved in \cite{sal} and will be used in the next contribution. At this point, we may give a short proof with a quite different way of \cite[Proposition 1.2]{sal} as follows.
\begin{thm}\label{ab}Let $L= A(n)$ be an abelian Lie algebra of dimension $L$. Then $\mathcal{M}^{(2)}(L)\cong A(\f{1}{3}n(n-1)(n+1))$.
\end{thm}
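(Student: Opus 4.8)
The plan is to compute $\mathcal{M}^{(2)}(L)$ straight from the definition $\mathcal{M}^{(2)}(L)=(R\cap F^3)/\gamma_3[R,F]$, choosing a free presentation adapted to the fact that $L$ is abelian. Concretely, I would let $F$ be the free Lie algebra on a totally ordered set $X$ with $|X|=n$ and write $L=F/R$ by sending $X$ to a basis of $A(n)$. Since every Lie bracket vanishes in $L$, the kernel $R$ is forced to be exactly $F^2=\gamma_2(F)$. This single observation is the only place where abelianness enters, and it turns the whole problem into a computation inside the lower central series of $F$.

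With $R=F^2$ in hand, the numerator is immediate: because $F^3=\gamma_3(F)\subseteq\gamma_2(F)=R$, we have $R\cap F^3=F^3$. For the denominator I would unwind the recursion $\gamma_{c+1}[R,F]=[\gamma_c[R,F],F]$ at $c=2$, so that $\gamma_3[R,F]=[[R,F],F]$. Substituting $R=F^2$ and using the defining relation $\gamma_{k+1}(F)=[\gamma_k(F),F]$ twice gives $[R,F]=[F^2,F]=F^3$ and then $[[R,F],F]=[F^3,F]=F^4$. Therefore $\mathcal{M}^{(2)}(L)\cong F^3/F^4$.

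It remains to identify this quotient. As the degree-$3$ graded piece of the free Lie algebra $F$, the space $F^3/F^4$ carries the trivial bracket (any product of two degree-$3$ elements lands in $F^4$), so it is abelian, and its dimension is the number of basic commutators of length $3$ on $n$ letters. By the M\"obius formula recalled in this section this number equals $\f{1}{3}\sum_{m\mid 3}\mu(m)n^{3/m}=\f{1}{3}(n^3-n)=\f{1}{3}n(n-1)(n+1)$, which yields $\mathcal{M}^{(2)}(L)\cong A(\f{1}{3}n(n-1)(n+1))$. I do not expect a genuine obstacle here: the argument is essentially bookkeeping, and the only step where care is needed is counting the copies of $F$ in $\gamma_3[R,F]$ correctly, since the definition places two (not three) brackets there and landing in $F^5$ instead of $F^4$ would destroy the match with the stated dimension.
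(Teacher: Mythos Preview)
Your argument is correct. Taking $F$ free on $n$ letters and $R=F^2$ gives $R\cap F^3=F^3$ and $[[R,F],F]=[F^3,F]=F^4$, so $\mathcal{M}^{(2)}(A(n))\cong F^3/F^4$; this is the abelian homogeneous component of degree~$3$, whose dimension is $l_n(3)=\tfrac{1}{3}(n^3-n)$ by the Witt/M\"obius formula recalled earlier. The only point worth a word of caution is the convention $\gamma_1[R,F]=R$ (so that $\gamma_3[R,F]=[[R,F],F]$ and not $[[[R,F],F],F]$), which you handle correctly.

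The paper, however, proves the result by a different route: it argues by induction on $n$, splitting $A(n)\cong A(n-1)\oplus A(1)$ and invoking Theorem~\ref{ds} (the behaviour of $\mathcal{M}^{(2)}$ under direct sums) together with the induction hypothesis; the base case $n=2$ is done the same way with $A(1)\oplus A(1)$. Your approach is more self-contained and arguably more natural here---it needs nothing beyond the definition and Shirshov's basic-commutator basis, and it generalises verbatim to $\mathcal{M}^{(c)}(A(n))\cong F^{c+1}/F^{c+2}$ for any $c$. The paper's approach, by contrast, is written expressly to showcase Theorem~\ref{ds} as an alternative to the original argument in \cite{sal}, and it illustrates how that direct-sum formula can recover known computations.
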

\begin{proof}
We perform induction on $n$. Assume $n=2$. Then Theorem \ref{ds} allows us to conclude that
\[\begin{array}{lcl}\mathcal{M}^{(2)}(L) &\cong& \mathcal{M}^{(2)}(A(1)) \oplus\mathcal{M}^{(2)}(A(1))
\oplus \big(A(1)\otimes_{\mathbb{Z}} A(1)\otimes _{\mathbb{Z}}A(1)\big )\vspace{.3cm}\\&\oplus&\big(A(1)\otimes_{\mathbb{Z}} A(1))\otimes _{\mathbb{Z}}A(1)\big)\cong A(1)\oplus A(1)\cong A(2).\end{array}\]

Now assume that $L\cong A(n)\cong A(n-1)\oplus A(1)$. By using induction hypothesis and Theorem \ref{ds}, we have
\[\begin{array}{lcl}\mathcal{M}^{(2)}(A(n-1)\oplus A(1))&\cong& \mathcal{M}^{(2)}(A(n-1))\oplus\big(A(n-1)\otimes_{\mathbb{Z}} A(n-1)\otimes _{\mathbb{Z}} A(1)\big)
\vspace{.3cm}\\&\oplus&\big(A(1)\otimes_{\mathbb{Z}} A(1)\otimes_{\mathbb{Z}} A(n-1)\big)\vspace{.3cm}\\&\cong&
A(\f{1}{3}n(n-1)(n-2))\oplus A((n-1)^2)\oplus A(n-1)\vspace{.3cm}\\&\cong& A(\f{1}{3}n(n-1)(n+1)).\end{array}\]
\end{proof}

The main strategy, in the next contribution, is to give the similar argument of Theorem \ref{h} for the $2$-nilpotent multiplier.
In the first theorem, we obtain the structure of $\mathcal{M}^{(2)}(L)$ when $L$ is non-capable Heisenberg algebra.
\begin{thm} Let $L=H(m)$ be a non-capable Heisenberg algebra. Then
\[\mathcal{M}^{(2)}(H(m))\cong A(\f{8m^3-2m}{3}).\]
\end{thm}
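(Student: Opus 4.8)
The plan is to strip off the centre of $L=H(m)$ and reduce the computation to the abelian quotient $L/Z(L)$, for which Theorem \ref{ab} already supplies the answer. First I set $I=Z(L)=L^2$, which is one-dimensional since $\dim L^2=1$. Because $L^2=Z(L)$ we have $L^3=[L^2,L]=0$; in particular $I$ is a $2$-central ideal with $[[I,L],L]=0$ and $I\cap L^3=0$. The quotient $K=L/I=L/L^2$ is abelian of dimension $2m$, hence $K\cong A(2m)$. Feeding this into Lemma \ref{ln}(ii) (equivalently Lemma \ref{l3}(ii)(a)), and using $L/L^3=L$ together with $I\cap L^3=0$, I obtain the exact sequence
\[(I\wedge L)\wedge L\ \st{\alpha}{\lo}\ \mathcal{M}^{(2)}(L)\ \lo\ \mathcal{M}^{(2)}(L/I)\ \lo\ 0,\]
so that $\mathcal{M}^{(2)}(L)\to\mathcal{M}^{(2)}(L/I)$ is onto with kernel exactly $\mathrm{im}\,\alpha$.

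Next I evaluate the right-hand term. By Theorem \ref{ab} applied to $K\cong A(2m)$,
\[\mathcal{M}^{(2)}(L/I)\cong\mathcal{M}^{(2)}(A(2m))\cong A\Big(\f{1}{3}(2m)(2m-1)(2m+1)\Big)=A\Big(\f{8m^3-2m}{3}\Big),\]
which is precisely the dimension asserted in the statement. It therefore remains only to show that the surjection above is an isomorphism, i.e. that $\mathrm{im}\,\alpha=0$.

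This last point is where non-capability enters and is the main obstacle. Since $L=H(m)$ is non-capable, Lemma \ref{ca} gives $Z^{\wedge}(L)\neq 0$; as $Z^{\wedge}(L)$ is a central ideal contained in $Z(L)$ and $Z(L)$ is one-dimensional, we are forced to have $Z^{\wedge}(L)=Z(L)=I$. By the very definition of the exterior centre, $i\wedge l=0$ in $L\wedge L$ for every $i\in I$ and $l\in L$, so the exterior product $I\wedge L$ contributes trivially and hence $(I\wedge L)\wedge L$ maps to $0$ in $\mathcal{M}^{(2)}(L)$; that is, $\alpha=0$. Consequently $\mathcal{M}^{(2)}(L)\cong\mathcal{M}^{(2)}(L/I)\cong A\big(\tfrac{8m^3-2m}{3}\big)$, as required. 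I expect the delicate step to be justifying rigorously that the triviality of $I\wedge L$ inside $L\wedge L$ forces $\alpha$ itself to vanish, rather than merely bounding its image; this is exactly the mechanism that breaks down when $m=1$, where $H(1)$ is capable, $Z^{\wedge}(L)=0$, and the term $(I\wedge L)\wedge L$ genuinely enlarges the $2$-nilpotent multiplier.
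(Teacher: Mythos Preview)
Your argument is correct and follows the paper's proof exactly: both use non-capability and Lemma~\ref{ca} to get $Z^{\wedge}(L)=L^2=I$, then invoke Lemma~\ref{l3} with this $I$ (noting $I\cap L^3=0$) to identify $\mathcal{M}^{(2)}(H(m))$ with $\mathcal{M}^{(2)}(A(2m))$, and finish with Theorem~\ref{ab}. The step you flag as delicate is glossed over in the paper too; it is easily made rigorous by observing that in a free presentation $L=F/R$, $I=S/R$, the conditions $I\subseteq Z(L)$ and $I\subseteq Z^{\wedge}(L)$ give $[S,F]\subseteq R$ and $R\cap[S,F]\subseteq[R,F]$, hence $[S,F]=[R,F]$ and $[S,F,F]=[R,F,F]$, so $\mathcal{M}^{(2)}(L)\to\mathcal{M}^{(2)}(L/I)$ is injective.
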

\begin{proof} Since $L$ is non-capable, Lemma \ref{ca} implies $Z^{\wedge}(L)=L^2=Z(L)$.
Invoking Lemma \ref{l3} by putting $I=Z^{\wedge}(L)$, we have $\mathcal{M}^{(2)}(H(m))\cong \mathcal{M}^{(2)}(H(m)/H(m)^2)$.
Now result follows from Theorem \ref{ab}.
\end{proof}
The following theorem from \cite[Theorem 3.4]{ni9} shows in the class of all Heisenberg algebras which one is capable.
\begin{thm}\label{ca1}$H(m)$ is capable if and only if $m = 1$.
\end{thm}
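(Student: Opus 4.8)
The plan is to apply the capability criterion of Lemma~\ref{ca}: $H(m)$ is capable precisely when its exterior centre $Z^{\wedge}(H(m))$ vanishes. Fix the standard basis $x_1,\dots,x_m,y_1,\dots,y_m,z$ of $H(m)$, whose only nontrivial brackets are $[x_i,y_i]=z$ for $1\le i\le m$. Since $Z^{\wedge}(H(m))$ is a central ideal, it is contained in $Z(H(m))=H(m)^2=\langle z\rangle$, which is one-dimensional; hence $Z^{\wedge}(H(m))$ is either $0$ or $\langle z\rangle$, and the whole question reduces to deciding whether $z\wedge l=0$ in $H(m)\wedge H(m)$ for every $l$. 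Because $z\wedge z=0$ and wedging is bilinear, it suffices to test $l=x_i$ and $l=y_i$ for each $i$.

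For $m\ge 2$ I would show $z\in Z^{\wedge}(H(m))$, so that the algebra is non-capable. The key is that with more than one symplectic pair we may write $z=[x_k,y_k]$ for an index $k$ different from the one being tested. Using the exterior-product relation $[a,b]\wedge c=a\wedge[b,c]-b\wedge[a,c]$ inherited from the defining relations of the tensor square, for $k\neq i$ one gets
\[ z\wedge x_i=[x_k,y_k]\wedge x_i=x_k\wedge[y_k,x_i]-y_k\wedge[x_k,x_i]=0, \]
and likewise $z\wedge y_i=x_k\wedge[y_k,y_i]-y_k\wedge[x_k,y_i]=0$, since all of these brackets involve distinct indices and therefore vanish. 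Thus $z\wedge l=0$ for all $l$, giving $Z^{\wedge}(H(m))=\langle z\rangle\neq 0$ and hence non-capability.

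For $m=1$ I would instead prove $z\notin Z^{\wedge}(H(1))$, so that $Z^{\wedge}(H(1))=0$ and $H(1)$ is capable. Here the cancellation above is unavailable: the only way to express $z$ as a bracket uses the pair $x_1,y_1$ themselves, and the relations merely yield the tautology $z\wedge x_1=z\wedge x_1$. The device I would use is the exact sequence
\[ 0\longrightarrow \mathcal{M}(H(1))\longrightarrow H(1)\wedge H(1)\st{\kappa}{\lo} H(1)^2\longrightarrow 0, \]
where $\kappa(a\wedge b)=[a,b]$, so that $\mathrm{dim}(H(1)\wedge H(1))=\mathrm{dim}~\mathcal{M}(H(1))+\mathrm{dim}~H(1)^2=2+1=3$ by Theorem~\ref{h}(i). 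On the other hand $H(1)\wedge H(1)$ is spanned by the three wedges $x_1\wedge y_1$, $x_1\wedge z$, $y_1\wedge z$, all other basis wedges being zero. A spanning set of size $3$ in a $3$-dimensional space is a basis, so these wedges are linearly independent; in particular $z\wedge x_1=-\,x_1\wedge z\neq 0$, whence $z\notin Z^{\wedge}(H(1))$.

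The main obstacle is precisely the $m=1$ direction: showing $z\wedge x_1\neq 0$ cannot be done from the exterior-square relations alone, since they are fully consistent with its vanishing, and instead forces one to pin down $\mathrm{dim}(H(1)\wedge H(1))$ through the Schur multiplier computation of Theorem~\ref{h}. Once the dimension count is in hand the linear-independence argument is immediate, while the $m\ge 2$ direction is a short manipulation of the bracket relation.
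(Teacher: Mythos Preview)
Your argument is correct. For $m\ge 2$ the bracket identity $[a,b]\wedge c=a\wedge[b,c]-b\wedge[a,c]$ together with the availability of a second symplectic pair forces $z\wedge x_i=z\wedge y_i=0$, so $Z^{\wedge}(H(m))=\langle z\rangle\neq 0$. For $m=1$ your dimension count via the exact sequence $0\to\mathcal{M}(H(1))\to H(1)\wedge H(1)\to H(1)^2\to 0$ and Theorem~\ref{h}(i) indeed gives $\dim(H(1)\wedge H(1))=3$, so the three wedges $x_1\wedge y_1,\ x_1\wedge z,\ y_1\wedge z$ are independent and $z\notin Z^{\wedge}(H(1))$.

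Note, however, that the paper does not itself prove Theorem~\ref{ca1}: it is quoted verbatim from \cite[Theorem~3.4]{ni9}. Your proof is essentially the one given in that reference, which is precisely the source of Lemma~\ref{ca} and of the exterior-centre machinery. So while there is nothing in the present paper to compare against, you have reconstructed the argument of \cite{ni9}; the only point worth making explicit is that the exact sequence you invoke for the $m=1$ direction is the standard identification of $\mathcal{M}(L)$ with the kernel of the commutator map $L\wedge L\to L^2$ from \cite{el2}, which the paper cites but does not state.
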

\begin{cor} \label{ca11}$H(m)$ is not $2$-capable for all $m\geq 2$.
\end{cor}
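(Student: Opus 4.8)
The plan is to reduce $2$-capability to ordinary capability and then invoke Theorem \ref{ca1}. The crucial observation is that for the upper central series, $2$-capability forces $1$-capability (capability in the usual sense): quotienting by the first center turns a presentation $L \cong H/Z_2(H)$ into a presentation of $L$ as a central quotient. So the whole statement should collapse to the already-established fact that $H(m)$ is capable only when $m=1$.

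First I would record the standard identity for the upper central series, namely that for any Lie algebra $H$ one has $Z_i(H/Z(H)) = Z_{i+1}(H)/Z(H)$. This follows by a short induction on $i$, the base case being $Z\bigl(H/Z(H)\bigr) = Z_2(H)/Z(H)$, which is just the definition of $Z_2$. Then I would argue by contradiction: suppose $H(m)$ is $2$-capable for some $m \geq 2$, so that $H(m) \cong H/Z_2(H)$ for some Lie algebra $H$ (using the definition of $2$-capability from the introduction, that $L \cong H/Z_2(H)$). Setting $\bar H = H/Z(H)$ and applying the identity with $i=1$ gives $Z(\bar H) = Z_2(H)/Z(H)$, so that
\[
\bar H / Z(\bar H) = \bigl(H/Z(H)\bigr)\big/\bigl(Z_2(H)/Z(H)\bigr) \cong H/Z_2(H) \cong H(m).
\]
This exhibits $H(m)$ as a central quotient $\bar H/Z(\bar H)$, hence $H(m)$ is capable. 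But Theorem \ref{ca1} asserts that $H(m)$ is capable if and only if $m=1$, contradicting $m \geq 2$.

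There is no genuine obstacle in this argument: all the content is concentrated in the elementary upper-central-series identity, and the rest is a formal consequence of Theorem \ref{ca1}. The only point worth verifying carefully is that the working definition of $2$-capability is indeed $L \cong H/Z_2(H)$, so that the quotient-by-center construction really does produce a capable-witnessing Lie algebra $\bar H$; once that is fixed, the corollary is immediate.
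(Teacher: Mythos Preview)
Your argument is correct and follows essentially the same approach as the paper: the paper's proof simply asserts that every $2$-capable Lie algebra is capable and then invokes Theorem \ref{ca1}, while you have spelled out the standard upper-central-series justification for that assertion. There is no substantive difference in strategy.
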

\begin{proof} Since every $2$-capable Lie algebra is capable, the result follows from Theorem \ref{ca1}.
\end{proof}
Since $H(m)$ for all $m\geq 2$ is not $2$-capable, we just need to discus about the $2$-capability of $H(1)$. Here, we obtain 2-nilpotent multiplier of $H(1)$ and in the next section we show $H(1)$ is $2$-capable.
\begin{thm} Let $L=H(1)$. Then
\[\mathcal{M}^{(2)}(H(1))\cong A(5).\]
\end{thm}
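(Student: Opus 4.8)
The plan is to compute $\mathcal{M}^{(2)}(H(1))$ directly from the definition, exploiting the fact that $H(1)$ is nothing but the free nilpotent Lie algebra of class $2$ on two generators. First I would fix a free presentation $H(1)\cong F/R$ with $F$ the free Lie algebra on $X=\{x_1,x_2\}$; since $\dim H(1)/H(1)^2=2$ two generators suffice, and comparing dimensions ($\dim F/F^3=2+1=3=\dim H(1)$) shows that the natural surjection $F/F^3\to H(1)$ is an isomorphism, so in fact $R=F^3=\gamma_3(F)$. This is the crucial structural observation, and it is precisely what breaks down for $H(m)$ with $m\geq 2$, where $H(m)$ carries extra relations beyond $\gamma_3$ and is no longer free nilpotent.

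With $R=\gamma_3(F)$ in hand, I would substitute into $\mathcal{M}^{(2)}(L)=(R\cap F^3)/[[R,F],F]$. The numerator collapses, $R\cap F^3=\gamma_3(F)\cap\gamma_3(F)=\gamma_3(F)$, while the denominator becomes $[[\gamma_3(F),F],F]=[\gamma_4(F),F]=\gamma_5(F)$. Hence
\[\mathcal{M}^{(2)}(H(1))\cong \gamma_3(F)/\gamma_5(F).\]
Since $[\gamma_3(F),\gamma_3(F)]\subseteq\gamma_6(F)\subseteq\gamma_5(F)$, this quotient is abelian, so it only remains to compute its dimension.

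To finish, I would count dimensions using the basic commutators on $X$ recalled above. The quotient $\gamma_3(F)/\gamma_5(F)$ has a basis consisting of the basic commutators of weights $3$ and $4$, whose numbers are $l_2(3)$ and $l_2(4)$. The Witt formula gives $l_2(3)=\tfrac13(2^3-2)=2$ and $l_2(4)=\tfrac14(2^4-2^2)=3$, so $\dim \gamma_3(F)/\gamma_5(F)=2+3=5$, and therefore $\mathcal{M}^{(2)}(H(1))\cong A(5)$.

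The arithmetic is short; the real work is in the first step, namely recognizing $H(1)$ as the free class-$2$ nilpotent algebra so that $R$ coincides with $\gamma_3(F)$ and the relative lower central terms $\gamma_\bullet[R,F]$ reduce to ordinary terms $\gamma_\bullet(F)$. One should also verify the indexing convention $\gamma_3[R,F]=[[R,F],F]$, which is fixed by consistency with the Schur multiplier case $c=1$ (where the denominator is $[R,F]=\gamma_2[R,F]$), before reading off the denominator as $\gamma_5(F)$.
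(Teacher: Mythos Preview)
Your proof is correct and follows essentially the same approach as the paper: both identify $H(1)$ with the free nilpotent Lie algebra $F/F^3$ on two generators, reduce $\mathcal{M}^{(2)}(H(1))$ to $\gamma_3(F)/\gamma_5(F)$, and then count basic commutators of weights $3$ and $4$. The only cosmetic difference is that the paper lists the five basic commutators explicitly while you compute $l_2(3)+l_2(4)=2+3=5$ via the Witt formula.
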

\begin{proof}
We know that $H(1)$ is in fact the free nilpotent Lie algebra of rank 2 and class 2. That is $H(1)\cong F/F^3$ in which $F$ is the free Lie algebra on 2 letters $x,y$. The second nilpotent multiplier of $H(1)$ is $F^4\cap F^3/[F^3,F,F]$ which is isomorphic to $F^3/F^5$ ant the latter is the abelian Lie algebra on the set of all basic commutators of weights 3 and 4 which is the set $\{[y,x,x],[y,x,y],[y,x,x,x],[y,x,x,y],[y,x,y,y]\}$. So the result holds.
\end{proof}
We summarize our result as below
\begin{thm}\label{th1}
Let $H(m)$ be Heisenberg algebra of dimension $2m+1$. Then
\begin{itemize}
\item[(i)]$\mathcal{M}^{(2)}(H(1))\cong A(5)$.
\item[(ii)]$\mathcal{M}^{(2)}(H(m))=A(\f{8m^3-2m}{3})$ for all $m\geq 2$.
\end{itemize}
\end{thm}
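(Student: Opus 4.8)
The plan is to read this as a collation theorem: both clauses have effectively been proved already, and the only real task is to assemble them, handling the cases $m=1$ and $m\geq 2$ separately because the capability of $H(m)$ — and therefore the method available to compute $\mathcal{M}^{(2)}$ — differs between the two ranges. So I would open the proof by splitting on whether $m=1$ or $m\geq 2$ and then cite the appropriate earlier result in each case.

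For part (i) there is genuinely nothing new to do: the isomorphism $\mathcal{M}^{(2)}(H(1))\cong A(5)$ is exactly the content of the preceding theorem, obtained by identifying $H(1)\cong F/F^3$ as the free nilpotent Lie algebra of rank $2$ and class $2$ and counting the five basic commutators of weights $3$ and $4$. I would simply invoke that computation. For part (ii), the key observation is that $H(m)$ is non-capable for every $m\geq 2$: this is immediate from Theorem \ref{ca1}, which says $H(m)$ is capable if and only if $m=1$, so any $H(m)$ with $m\geq 2$ fails to be capable. (Corollary \ref{ca11} already records the stronger fact that such $H(m)$ is not even $2$-capable.) With non-capability established, I would apply the earlier theorem on non-capable Heisenberg algebras to conclude $\mathcal{M}^{(2)}(H(m))\cong A(\tfrac{8m^3-2m}{3})$. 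That theorem itself proceeds by using $Z^{\wedge}(L)=L^2=Z(L)$ together with Lemma \ref{l3} to collapse the computation onto $\mathcal{M}^{(2)}(H(m)/H(m)^2)=\mathcal{M}^{(2)}(A(2m))$, which Theorem \ref{ab} evaluates as $A(\tfrac{1}{3}(2m)(2m-1)(2m+1))=A(\tfrac{8m^3-2m}{3})$.

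I do not expect any substantive obstacle, since all of the analytic content sits inside the cited lemmas and theorems. The one point I would take care to flag explicitly is why the two cases cannot be merged: substituting $m=1$ into $\tfrac{8m^3-2m}{3}$ gives $2$ rather than $5$, precisely because $H(1)$ is capable and so the reduction through $Z^{\wedge}(L)$ exploited in part (ii) is simply unavailable. Making this discrepancy visible is the only thing requiring attention, and it justifies retaining the separate free-Lie-algebra computation for $H(1)$.
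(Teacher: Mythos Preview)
Your proposal is correct and matches the paper's approach exactly: the theorem is presented there as a summary of the two preceding results (``We summarize our result as below''), with no new argument, and your write-up simply makes explicit the citations for each case. Your additional remark about why the formula $\tfrac{8m^3-2m}{3}$ fails at $m=1$ is a helpful clarification beyond what the paper provides.
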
The following Lemma lets us to obtain the structure of the $2$-nilpotent multiplier of all nilpotent Lie algebras with $\mathrm{dim}~
L^2=1$.
\begin{lem}\cite[Lemma 3.3]{ni7}\label{l1} Let $L$ be an $n$-dimensional Lie algebra and
$\mathrm{dim}~
L^2=1$. Then \[L\cong H(m)\oplus A(n-2m-1).\]
\end{lem}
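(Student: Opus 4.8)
The final statement is Lemma~\ref{l1}, which asserts that every finite dimensional Lie algebra $L$ with $\dim L^2=1$ decomposes as $H(m)\oplus A(n-2m-1)$ for some $m$. The plan is to exploit the one-dimensionality of the derived subalgebra to put the Lie bracket into a canonical antisymmetric form and then split off the radical of that form as an abelian summand.

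First I would write $L^2=\langle z\rangle$ for a single nonzero vector $z$, so that for all $x,y\in L$ we have $[x,y]=\beta(x,y)\,z$ for a uniquely determined scalar $\beta(x,y)\in F$. The defining bilinearity and antisymmetry of the Lie bracket force $\beta\colon L\times L\to F$ to be an alternating bilinear form. The Jacobi identity is automatically satisfied here: any double bracket lands in $[L^2,L]\subseteq\gamma_3(L)$, and since $\dim L^2=1$ with $z$ central (note $[z,L]\subseteq L^2$ is forced to vanish because $[z,L]=\beta(z,\cdot)z$ and one checks $\beta(z,\cdot)=0$ as $z\in L^2$ is in the radical once $L$ is nilpotent), all such terms drop out. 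So the entire Lie structure is encoded in the single alternating form $\beta$.

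Next I would invoke the classification of alternating bilinear forms over a field. Let $W=\operatorname{rad}\beta=\{x\in L:\beta(x,y)=0\ \forall y\in L\}$. Standard linear algebra gives a basis $e_1,f_1,\dots,e_m,f_m,w_1,\dots,w_r$ of $L$ in which $\beta(e_i,f_i)=1$, all other pairings among the $e$'s and $f$'s vanish, and the $w_j$ span $W$. I must be careful to ensure $z\in\operatorname{rad}\beta$ so that $z$ can be taken among the $w_j$; this holds because $z\in L^2$ is central. Then $2m=\dim L-\dim W$ is the rank of $\beta$, which is even, and $r=\dim W$.

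Finally I would read off the decomposition. Set $H=\langle e_1,f_1,\dots,e_m,f_m,z\rangle$; the brackets $[e_i,f_i]=z$ with all other basis brackets zero exhibit $H$ as the Heisenberg algebra $H(m)$. Let $A=\langle w_1,\dots,w_{r-1}\rangle$ be a complement to $\langle z\rangle$ inside $W$; since every $w_j$ is in the radical, $A$ is a central (indeed abelian) ideal with trivial bracket against everything, giving $A\cong A(n-2m-1)$. Because $A\cap H=0$, $A+H=L$, and $[A,H]=0$, we obtain $L\cong H(m)\oplus A(n-2m-1)$ as a direct sum of ideals. The main obstacle is the bookkeeping ensuring $z$ lies in the radical and can be chosen as part of the symplectic basis's null part; once that is secured, the decomposition follows directly from the normal form of the alternating form.
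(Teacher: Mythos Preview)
The paper does not supply its own proof of this lemma; it is quoted without argument from \cite[Lemma~3.3]{ni7}. Your proof is correct and follows the standard route: encode the bracket as a single alternating bilinear form $\beta$ on $L$ via $[x,y]=\beta(x,y)z$, bring $\beta$ to symplectic normal form, and split $L$ into the span of the hyperbolic pairs together with $z$ (giving $H(m)$) and a complement to $\langle z\rangle$ inside the radical (giving $A(n-2m-1)$). The one substantive point is that $z$ must lie in $\operatorname{rad}\beta$, i.e.\ be central; you correctly observe this requires nilpotency (so that $L^3=[L^2,L]\subsetneq L^2$ forces $L^3=0$). That hypothesis is implicit in the paper's setting and explicit in the cited source, though omitted from the lemma's statement here; without it the two-dimensional solvable algebra with $[x,y]=y$ would violate the conclusion.
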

\begin{thm}\label{mt1}Let $L$ be an $n$-dimensional Lie algebra with
$\mathrm{dim}~L^2=1$. Then \[\mathcal{M}^{(2)}(L) \cong \left\{\begin{array}{lcl} A(\frac{1}{3}n(n-1)(n-2)) & if\ m>1 ,\\
\\
A(\frac{1}{3}n(n-1)(n-2)+3) & if\ m=1.\end{array}\right.\]
\end{thm}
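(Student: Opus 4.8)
The plan is to reduce everything to the direct-sum formula of Theorem \ref{ds}, using the structural decomposition supplied by Lemma \ref{l1}. Since $\mathrm{dim}~L^2 = 1$, Lemma \ref{l1} gives $L \cong H(m) \oplus A(n-2m-1)$ for some $m \geq 1$. Writing $A = H(m)$ and $B = A(n-2m-1)$ and setting $d = n-2m-1$, I would apply Theorem \ref{ds} to obtain
\[
\mathcal{M}^{(2)}(L) \cong \mathcal{M}^{(2)}(H(m)) \oplus \mathcal{M}^{(2)}(A(d)) \oplus \big((A/A^2 \otimes_{\mathbb{Z}} A/A^2)\otimes_{\mathbb{Z}} B\big) \oplus \big((B \otimes_{\mathbb{Z}} B)\otimes_{\mathbb{Z}} A/A^2\big),
\]
so that the whole problem becomes a dimension count for the four summands.

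Next I would evaluate each summand from results already in hand. By Theorem \ref{th1}, $\mathcal{M}^{(2)}(H(m))$ has dimension $\f{8m^3-2m}{3}$ when $m \geq 2$ and dimension $5$ when $m = 1$; by Theorem \ref{ab}, $\mathcal{M}^{(2)}(A(d)) \cong A(\f{1}{3}d(d-1)(d+1))$. Since $\mathrm{dim}~A/A^2 = 2m$ and $\mathrm{dim}~B = d$, the two tensor summands have dimensions $4m^2 d$ and $2m d^2$ respectively. Because every factor on the right is an abelian Lie algebra, $\mathcal{M}^{(2)}(L)$ is abelian and it suffices to add these dimensions.

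Finally I would carry out the arithmetic simplification. Substituting $d = n-2m-1$, equivalently $2m+d = n-1$, and using the expansion $(2m+d)^3 - (2m+d) = 8m^3 + 12m^2 d + 6md^2 + d^3 - 2m - d$, the sum
\[
\f{8m^3-2m}{3} + \f{d^3-d}{3} + 4m^2 d + 2md^2
\]
collapses to $\f{1}{3}(n-1)\big((n-1)^2-1\big) = \f{1}{3}n(n-1)(n-2)$, which settles the case $m \geq 2$. For $m = 1$ the only change is that $\mathrm{dim}~\mathcal{M}^{(2)}(H(1)) = 5$ rather than the value $\f{8\cdot 1 - 2}{3} = 2$ predicted by the $m \geq 2$ formula, so the total increases by exactly $3$, yielding $\f{1}{3}n(n-1)(n-2)+3$. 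The only obstacle here is the bookkeeping of this simplification; there is no conceptual difficulty, since every structural input (the decomposition, the direct-sum behavior of $\mathcal{M}^{(2)}$, and the multipliers of Heisenberg and abelian algebras) is already established in Lemma \ref{l1} and Theorems \ref{ds}, \ref{ab} and \ref{th1}.
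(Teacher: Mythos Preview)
Your proposal is correct and follows the same strategy as the paper: decompose $L$ via Lemma~\ref{l1}, apply the direct-sum formula of Theorem~\ref{ds}, and feed in the known multipliers from Theorems~\ref{ab} and~\ref{th1}. In fact you carry out the general $m\geq 2$ arithmetic explicitly where the paper only treats $m=1$ in detail and then says the other case is similar, so your write-up is if anything a bit more complete.
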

\begin{proof}By using Lemma \ref{l1}, we have $L\cong H(m)\oplus A(n-2m-1)$. Using the behavior of $2$-nilpotent multiplier respect to direct sum
\[\begin{array}{lcl}\mathcal{M}^{(2)}(L) &\cong& \mathcal{M}^{(2)}(H(m)) \oplus~\mathcal{M}^{(2)}(A(n-2m-1))
\vspace{.3cm}\\&\oplus&\big((H(m)/H(m)^2\otimes_{\mathbb{Z}} H(m)/H(m)^2)\otimes_{\mathbb{Z}} A(n-2m-1)\big )\vspace{.3cm}\\&\oplus&\big((A(n-2m-1)\otimes_{\mathbb{Z}} A(n-2m-1))\otimes _{\mathbb{Z}}H(m)/H(m)^2\big).\end{array}\]
First assume that $m=1$, then by virtue of Theorems \ref{ab} and \ref{th1}
\[\mathcal{M}^{(2)}(H(1))\cong A(5)~\text{and}~\mathcal{M}^{(2)}(A(n-3))\cong A(\f{1}{3}(n-2)(n-3)(n-4)).\]
Thus
\[\begin{array}{lcl}\mathcal{M}^{(2)}(L) &\cong& A(5) \oplus~A(\f{1}{3}(n-2)(n-3)(n-4))
\vspace{.3cm}\\&\oplus&\big(A(2)\otimes_{\mathbb{Z}} A(2))\otimes_{\mathbb{Z}} A(n-3)\big )\vspace{.3cm}\\&\oplus&\big((A(n-3)\otimes_{\mathbb{Z}} A(n-3))\otimes_{\mathbb{Z}} A(2)\big)
\vspace{.3cm}\\&\cong& A(\frac{1}{3}n(n-1)(n-2)+3).\end{array}\]The case $m\geq 1$ is obtained by a similar fashion.
\end{proof}

\begin{thm} Let $L$ be a $n$-dimensional nilpotent Lie algebra such that $\mathrm{dim}~L^2=m (m\geq 1)$. Then
\[\mathrm{dim}~\mathcal{M}^{(2)}(L)\leq \f13(n-m)\big((n+2m-2)(n-m-1)+3(m-1)\big)+3.\]
In particular, $\mathrm{dim}~\mathcal{M}^{(2)}(L) \leq \f13n(n-1)(n-2)+3$. The equality holds in last inequality if and only if $L\cong
H(1)\oplus A(n-3)$.
\end{thm}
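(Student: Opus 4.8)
The plan is to establish the two-parameter bound by induction on $m=\dim L^2$, to obtain the ``in particular'' estimate by maximising the resulting expression in $m$, and to extract the equality case from Theorem \ref{mt1}. Write $f(m;n):=\frac13(n-m)\big((n+2m-2)(n-m-1)+3(m-1)\big)+3$ for the claimed upper bound, and note that $f(1;n)=\frac13 n(n-1)(n-2)+3$. For the base case $m=1$, Theorem \ref{mt1} gives that $\dim\mathcal{M}^{(2)}(L)$ equals $\frac13 n(n-1)(n-2)$ or $\frac13 n(n-1)(n-2)+3$, both at most $f(1;n)$.

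For the inductive step let $m\ge 2$. Since $L$ is nilpotent with $L^2\ne 0$, the last nonzero term of its lower central series lies in $Z(L)\cap L^2$, so I may pick a one-dimensional ideal $I\subseteq Z(L)\cap L^2$; being central it satisfies $[[I,L],L]=0$, and $K:=L/I$ is nilpotent with $\dim K=n-1$ and $\dim K^2=m-1$. Feeding $I$ into the exact sequence of Lemma \ref{ln}(ii) yields
\[\dim\mathcal{M}^{(2)}(L)\le \dim\big((I\wedge L/L^3)\wedge L/L^3\big)+\dim\mathcal{M}^{(2)}(K)-\dim(I\cap L^3).\]

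The crux is to bound the first summand by $(n-m)^2$. Because $I$ is central and $I\subseteq L^2$, its image $\bar I$ in $L/L^3$ is central and lies in $(L/L^3)^2$; working through the defining relations of the tensor product one finds that every generator whose bracket enters the $L^2/L^3$ part vanishes, so $\bar I\otimes(L/L^3)\cong\bar I\otimes_F(L/L^2)$, and iterating, $(I\wedge L/L^3)\wedge L/L^3$ is a quotient of $(\bar I\otimes_F L/L^2)\otimes_F L/L^2$, of dimension $\dim\bar I\cdot(n-m)^2\le(n-m)^2$. Hence $\dim\mathcal{M}^{(2)}(L)\le\dim\mathcal{M}^{(2)}(K)+(n-m)^2$, and combining the induction hypothesis $\dim\mathcal{M}^{(2)}(K)\le f(m-1;n-1)$ with the identity $f(m-1;n-1)+(n-m)^2=f(m;n)$ closes the induction. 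Unwinding the recursion, with Theorem \ref{ab} for the terminal abelian quotient, gives $\dim\mathcal{M}^{(2)}(L)\le\dim\mathcal{M}^{(2)}(A(n-m))+(m-1)(n-m)^2+3=f(m;n)$.

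It remains to maximise $f(m;n)$ and to settle equality. Setting $q=n-m$ and $g(q)=f(m;n)$, a direct computation gives $g(q)-g(q-1)=-2q^2+2nq-n$, which is positive for every integer $1\le q\le n-1$; thus $g$ is strictly increasing and peaks at $q=n-1$, i.e. $m=1$, with value $\frac13 n(n-1)(n-2)+3$. This proves the second inequality and shows that equality in it forces $\dim L^2=m=1$. For such $L$, Theorem \ref{mt1} attains the value $\frac13 n(n-1)(n-2)+3$ exactly when the Heisenberg summand is $H(1)$, that is $L\cong H(1)\oplus A(n-3)$, and conversely this algebra realises the bound by the $m=1$ case of Theorem \ref{mt1}. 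The main obstacle is the collapse of the iterated exterior product $(I\wedge L/L^3)\wedge L/L^3$ to dimension at most $(n-m)^2$, which is precisely where the centrality of $I$ and the inclusion $I\subseteq L^2$ are needed.
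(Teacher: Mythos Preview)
Your argument is correct and follows the same route as the paper: induct on $m=\dim L^2$, start from Theorem~\ref{mt1} at $m=1$, and for $m\ge 2$ choose a one--dimensional central ideal $I$, feed it into the exact sequence of Lemma~\ref{ln}/Lemma~\ref{l3} to get $\dim\mathcal{M}^{(2)}(L)\le \dim\mathcal{M}^{(2)}(L/I)+(n-m)^2$, and close with the identity $f(m-1;n-1)+(n-m)^2=f(m;n)$. Your write--up is in fact tighter than the paper's in two places: you make explicit that $I$ must be chosen inside $Z(L)\cap L^2$ (so that $\dim(L/I)^2=m-1$ and the induction hypothesis applies), and you actually supply the maximisation of $f(m;n)$ over $m$ together with the equality analysis, which the paper's proof leaves to the reader.
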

\begin{proof} We do induction on $m$. For $m=1$, the result follows from Theorem \ref{mt1}. Let $m\geq 2$, and taking $I$ a $1$-dimensional central
ideal of $L$. Since $I$ and $L/L^3$ act to each other trivially we have $(I\otimes L/L^3)\otimes L/L^3\cong (I\otimes_{\mathbb{Z}} \f{L/L^3}{(L/L^3)^2})\otimes_{\mathbb{Z}} \f{L/L^3}{(L/L^3)^2})$. Thus by Lemma \ref{l3} $(ii)(b)$
\[\mathrm{dim}~\mathcal{M}^{(2)}(L)+\mathrm{dim}~I\cap L^3 \leq \mathrm{dim}~\mathcal{M}^{(2)}(L/I)+\mathrm{dim}~(I\otimes_{\mathbb{Z}} \f{L/L^3}{(L/L^3)^2})\otimes_{\mathbb{Z}} \f{L/L^3}{(L/L^3)^2}).\]
Since \[\mathrm{dim}~\mathcal{M}^{(2)}(L/I)\leq \f13(n-m)\big((n+2m-5)(n-m-1)+3(m-2)\big),\] we have
\[\begin{array}{lcl}\mathrm{dim}~\mathcal{M}^{(2)}(L)&\leq& \f13(n-m)\big((n+2m-5)(n-m-1)+3(m-2)\big)+3+(n-m)^2
\vspace{.3cm}\\&=&\f13(n-m)\big((n+2m-2)(n-m-1)+3(m-1)\big)+3,\end{array}\] as required.
\end{proof}
The following corollary shows that the converse of \cite[Proposition 1.2]{sal} for $c=2$ is also true. In fact it proves always
$\mathrm{Ker}~\theta=0$ in \cite[Corollary 2.8 (ii)a]{sal}.
\begin{cor}\label{lab}Let $L$ be a $n$-dimensional nilpotent Lie algebra. If $\mathrm{dim}~\mathcal{M}^{(2)}(L)=\f13n(n-1)(n+1)$, then $L\cong A(n)$.
\end{cor}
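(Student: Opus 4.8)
The plan is to argue by contradiction: assume $L$ is non-abelian and show that the hypothesis on $\mathrm{dim}~\mathcal{M}^{(2)}(L)$ then forces $n$ into a range where no non-abelian nilpotent Lie algebra exists. The engine of the argument is the bound established in the preceding theorem, namely $\mathrm{dim}~\mathcal{M}^{(2)}(L)\leq \f13 n(n-1)(n-2)+3$, which is valid precisely when $\mathrm{dim}~L^2=m\geq 1$, that is, precisely when $L$ is not abelian. So I would first suppose $L$ is not abelian, obtaining $m\geq 1$ and hence this inequality.

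Next I would substitute the hypothesis $\mathrm{dim}~\mathcal{M}^{(2)}(L)=\f13 n(n-1)(n+1)$ into that bound. Since $\f13 n(n-1)(n+1)-\f13 n(n-1)(n-2)=n(n-1)$, the estimate collapses to $n(n-1)\leq 3$, which holds only for $n\leq 2$. To close the gap at small dimensions I would then observe that every nilpotent Lie algebra of dimension at most $2$ is abelian, since the unique non-abelian $2$-dimensional Lie algebra is not nilpotent. This contradicts the standing assumption, so $L$ must be abelian; being $n$-dimensional, $L\cong A(n)$, and Theorem \ref{ab} confirms that $A(n)$ does attain the value $\f13 n(n-1)(n+1)$. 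As a consistency check one may also note that inequality \ref{e1} already forces $\mathrm{dim}~L^3=0$ under the hypothesis, so $L$ has class at most $2$ in any case.

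The computation is entirely routine, and there is no genuine obstacle here once the previous theorem is in hand; the conceptual point is simply that its bound $\f13 n(n-1)(n-2)+3$ lies exactly $n(n-1)$ below the abelian value $\f13 n(n-1)(n+1)$, making the two incompatible as soon as $n\geq 3$. The only step that must not be skipped is the small-dimension verification, since the inequality $n(n-1)\leq 3$ does not by itself rule out $n=1,2$; these cases are handled by the elementary remark that low-dimensional nilpotent Lie algebras are forced to be abelian.
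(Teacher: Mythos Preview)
Your argument is correct and is precisely the intended derivation: the corollary is placed immediately after the theorem giving the bound $\mathrm{dim}~\mathcal{M}^{(2)}(L)\leq \f13 n(n-1)(n-2)+3$ for $\mathrm{dim}~L^2\geq 1$, and the paper offers no separate proof because comparing this bound with $\f13 n(n-1)(n+1)$ is exactly what one is meant to do. Your handling of the residual cases $n\leq 2$ is the appropriate way to finish.
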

\section{2-capability of Lie algebras}
Following the terminology of \cite{ell2} for groups, a Lie algebra $L$ is said to be $2$-capable provided that
$L\cong H/Z_2(H)$ for a Lie algebra $H$. The concept $Z_2^{*}(L)$ was defined in \cite{salria} and it was proved that if $\pi:F/[R,F,F]\rightarrow F/R$ be a natural Lie epimorphism then
\[Z^{*}_2(L)=\pi(Z_2(F/[[R,F],F])),~ \text{for}~ c\geq 0 \]

The following proposition gives the close relation between $2$-capability and $Z^{*}_2(L)$.
\begin{prop}
A Lie algebra $L$ is $2$-capable if and only if $Z^{*}_2(L)=0.$ \end{prop}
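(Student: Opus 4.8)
The plan is to prove both implications by comparing the fixed free presentation $L=F/R$ with the witness algebra, using the natural epimorphism $\pi\colon F/[[R,F],F]\to F/R=L$ in which, by definition, $Z_2^{*}(L)=\pi(Z_2(F/[[R,F],F]))$. Write $H_0=F/[[R,F],F]$ and $K=R/[[R,F],F]=\mathrm{Ker}~\pi$. The cornerstone observation, used in both directions, is that $K\subseteq Z_2(H_0)$ always: for $r\in R$ and $f,g\in F$ one has $[[r,f],g]\in[[R,F],F]$, so the image of $[r,f]$ lies in $Z(H_0)$, whence the image of $r$ lies in $Z_2(H_0)$.

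For the easier implication, suppose $Z_2^{*}(L)=0$. Then $\pi(Z_2(H_0))=0$ forces $Z_2(H_0)\subseteq K$, and combined with the cornerstone inclusion this gives $Z_2(H_0)=K$. Consequently $H_0/Z_2(H_0)=H_0/K\cong F/R=L$, so $L$ is $2$-capable, witnessed by $H=H_0$.

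For the converse, assume $L\cong H/Z_2(H)$. Since $F$ is free, I would lift the composite $F\to F/R\cong H/Z_2(H)$ to a homomorphism $\phi\colon F\to H$; by construction $\phi(R)\subseteq Z_2(H)$. Because $[Z_2(H),H]\subseteq Z(H)$ and $[Z(H),H]=0$, we get $\phi([[R,F],F])\subseteq[[Z_2(H),H],H]=0$, so $\phi$ descends to $\bar\phi\colon H_0\to H$. Taking any $\bar x\in Z_2(H_0)$ with $x\in F$, the relation $[[x,f],g]\in[[R,F],F]$ yields $[[\phi(x),\phi(f)],\phi(g)]=0$ for all $f,g\in F$. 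The goal is then to promote this to $[[\phi(x),h],h']=0$ for all $h,h'\in H$, which identifies $\phi(x)\in Z_2(H)$ and hence $\pi(\bar x)=0$; letting $\bar x$ range over $Z_2(H_0)$ gives $Z_2^{*}(L)=0$.

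The main obstacle is precisely this last promotion step, because the lift $\phi$ need not be surjective; only $\phi(F)+Z_2(H)=H$ is guaranteed. Writing $h=\phi(f)+u$ and $h'=\phi(g)+v$ with $u,v\in Z_2(H)$ and expanding $[[\phi(x),h],h']$, every term carrying a central contribution $[\phi(x),u]\in[H,Z_2(H)]\subseteq Z(H)$ dies immediately, and the one genuinely dangerous term $[[\phi(x),\phi(f)],v]$ is killed by the Jacobi identity: it equals $[\phi(x),[\phi(f),v]]-[\phi(f),[\phi(x),v]]$, and since $v\in Z_2(H)$ both inner brackets land in $Z(H)$, so the outer brackets vanish. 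I expect this Jacobi manipulation to be the only genuinely technical point; note that both directions are carried out with the single fixed presentation $L=F/R$ used to define $Z_2^{*}(L)$, so no independence-of-presentation argument is required.
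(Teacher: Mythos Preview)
Your argument is correct. The forward implication matches the paper's proof (you are simply more explicit about why $K=Z_2(H_0)$, which the paper asserts as ``obvious''). For the converse, however, you take a genuinely different route. The paper does not keep the original presentation of $L$: it instead chooses a free presentation $F/R\cong H$ of the witness algebra, sets $S/R=Z_2(H)$ so that $L\cong F/S$, and then observes that if $x+[[S,F],F]\in Z_2(F/[[S,F],F])$ then $[[x,F],F]\subseteq[[S,F],F]\subseteq R$, forcing $x+R\in Z_2(H)=S/R$ and hence $x\in S=\mathrm{Ker}\,\eta$. This is very short, but it silently uses that $Z_2^{*}(L)$ is independent of the chosen free presentation of $L$ (a fact the paper imports from \cite{salria}). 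Your approach avoids that dependence entirely by lifting along the fixed presentation via freeness of $F$; the price you pay is the promotion step from $\phi(F)$ to $H=\phi(F)+Z_2(H)$, which your Jacobi computation handles correctly. So the paper's proof is cleaner once well-definedness is granted, while yours is self-contained.
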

\begin{proof}Let $L$ has a free presentation $F/R$, and $Z^{*}_2(L)=0$. Consider the natural epimorphism $\pi: F/[[R,F],F]\twoheadrightarrow F/R$.
Obviously\[\mathrm{Ker}~\pi=R/[[R,F],F]=Z_2(F/[[R,F],F]),\] and hence $L\cong F/[[R,F],F]/Z_2(F/[[R,F],F])$, which is a $2$-capable.

Conversely, let $L$ is $2$-capable and so $H/Z_2(H)\cong L$ for a Lie algebra $H$. Put $F/R\cong H$ and $Z_2(H)\cong S/R$. There is natural
epimorphism $\eta: F/[[S,F],F]\twoheadrightarrow F/S\cong L$. Since $Z_2(F/[[R,F],F])\subseteq \mathrm{Ker}~\eta$, $Z^{*}_2(L)=0$, as required.
\end{proof}
The following Theorem gives an instrumental tools to present the main.
\begin{thm}\label{ti}Let $I$ be an ideal subalgebra of $L$ such that $I\subseteq Z^{*}_2(L)$. Then the natural Lie homomorphism
$\mathcal{M}^{(2)}(L)\rightarrow \mathcal{M}^{(2)}(L/I)$ is a monomorphism.
\end{thm}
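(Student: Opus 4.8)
The plan is to work with a fixed free presentation $L = F/R$ and to compute the kernel of the natural map explicitly. Writing $I = S/R$ with $R \subseteq S \subseteq F$, so that $L/I \cong F/S$, I would use the descriptions $\mathcal{M}^{(2)}(L) = (R \cap F^3)/[[R,F],F]$ and $\mathcal{M}^{(2)}(L/I) = (S \cap F^3)/[[S,F],F]$, the natural homomorphism being the one induced by the inclusion $R \cap F^3 \hookrightarrow S \cap F^3$ (it is well defined since $R \subseteq S$ forces $[[R,F],F] \subseteq [[S,F],F]$). Because $[[S,F],F] \subseteq F^3$, a short diagram chase identifies the kernel as
\[
\frac{R \cap [[S,F],F]}{[[R,F],F]}.
\]
Hence the whole statement reduces to proving the equality $R \cap [[S,F],F] = [[R,F],F]$.

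The key step is to translate the hypothesis $I \subseteq Z^{*}_2(L)$ into a statement about the presentation. Set $\bar F = F/[[R,F],F]$ and let $\pi \colon \bar F \to F/R = L$ be the natural epimorphism, so that by definition $Z^{*}_2(L) = \pi(Z_2(\bar F))$. First I would observe that $\mathrm{Ker}~\pi = R/[[R,F],F]$ lies in $Z_2(\bar F)$, since $[[R/[[R,F],F],\bar F],\bar F] = [[R,F],F]/[[R,F],F] = 0$. Consequently $Z_2(\bar F) = T/[[R,F],F]$ for a uniquely determined ideal $T$ with $R \subseteq T \subseteq F$, and the defining property $[[Z_2(\bar F),\bar F],\bar F] = 0$ of the second centre reads exactly $[[T,F],F] \subseteq [[R,F],F]$. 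The inclusion $I \subseteq Z^{*}_2(L) = T/R$ then becomes $S \subseteq T$.

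From $S \subseteq T$ I immediately obtain $[[S,F],F] \subseteq [[T,F],F] \subseteq [[R,F],F]$. Therefore
\[
R \cap [[S,F],F] \subseteq R \cap [[R,F],F] = [[R,F],F],
\]
using $[[R,F],F] \subseteq R$, while the reverse inclusion $[[R,F],F] \subseteq R \cap [[S,F],F]$ is clear from $R \subseteq S$. Thus the kernel displayed above vanishes and the map is a monomorphism.

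I expect the main obstacle to be the bookkeeping in the middle paragraph: one must verify that the second centre $Z_2(\bar F)$ genuinely contains $\mathrm{Ker}~\pi$, so that its preimage $T$ contains $R$ and the equivalence $I = S/R \subseteq T/R \Leftrightarrow S \subseteq T$ is legitimate, and one must extract from $[[Z_2(\bar F),\bar F],\bar F]=0$ precisely the bracket inclusion $[[T,F],F] \subseteq [[R,F],F]$ that drives the argument. Alternatively, the same conclusion can be reached from the exact sequence of Lemma \ref{ln}~(i), by showing that the composite $\mathrm{Ker}(\mu_I^2) \to \mathcal{M}^{(2)}(L)$ is the zero map; but the presentation-level computation above seems the most transparent route.
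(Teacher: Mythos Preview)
Your proof is correct and follows essentially the same route as the paper: both fix a free presentation $L=F/R$, write $I=S/R$, identify the natural map $\mathcal{M}^{(2)}(L)\to\mathcal{M}^{(2)}(L/I)$ at the level of $F$, and use the hypothesis $I\subseteq Z^{*}_2(L)$ to force the kernel to be trivial. Your version is considerably more explicit than the paper's---in particular you spell out the identification of the kernel as $(R\cap[[S,F],F])/[[R,F],F]$ and carefully extract from the definition of $Z^{*}_2(L)$ the inclusion $[[S,F],F]\subseteq[[R,F],F]$, whereas the paper compresses all of this into a single sentence (and contains some typographical slips, e.g.\ $F^2$ for $F^3$ and $R\cap S^2$ for $S\cap F^3$).
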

\begin{proof} Let $S/R$ and $F/R$ be two free presentations of $L$ and $I$, respectively. Looking the natural homomorphism
\[\phi:\mathcal{M}^{(2)}(L)\cong R\cap F^2/[[R,F],F]\rightarrow \mathcal{M}^{(2)}(L/I)\cong R\cap S^2/[[S,F],F]\] and the fact that
$S/R\subseteq Z_2(F/R)$ show $\phi$ has trivial kernel. The result follows.
\end{proof}
\begin{thm} A Heisenberg Lie algebra $H(m)$ is $2$-capable if and only if $m=1$.
\end{thm}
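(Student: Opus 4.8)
The plan is to split the equivalence into its two directions and to dispatch the harder reverse direction through the characterization of $2$-capability by $Z^{*}_2$. The forward implication is already available: if $H(m)$ were $2$-capable for some $m\geq 2$, this would contradict Corollary \ref{ca11}, so $2$-capability forces $m=1$. It therefore remains only to prove that $H(1)$ is $2$-capable, and for this I would show directly that $Z^{*}_2(H(1))=0$ and then quote the Proposition stating that $L$ is $2$-capable if and only if $Z^{*}_2(L)=0$.

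Write $L=H(1)$ with basis $x,y,z$ and $[x,y]=z$, so that $L^2=Z(L)=\langle z\rangle$ is one-dimensional. I would argue by contradiction: assume $Z^{*}_2(L)\neq 0$. Since $Z^{*}_2(L)$ is an ideal of $L$ and $L$ is nilpotent, it meets the centre nontrivially, and because $\dim Z(L)=1$ this yields $L^2=Z(L)\subseteq Z^{*}_2(L)$. Now set $I=L^2\subseteq Z^{*}_2(L)$ and apply Theorem \ref{ti}: the natural homomorphism $\mathcal{M}^{(2)}(L)\rightarrow \mathcal{M}^{(2)}(L/I)$ is a monomorphism.

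The contradiction then comes from a dimension count. Here $L/I\cong A(2)$, so Theorem \ref{ab} gives $\dim \mathcal{M}^{(2)}(A(2))=\tfrac13\cdot 2\cdot 1\cdot 3=2$, while Theorem \ref{th1}(i) gives $\dim \mathcal{M}^{(2)}(L)=5$. A monomorphism from a $5$-dimensional space into a $2$-dimensional one cannot exist, so the assumption fails and $Z^{*}_2(H(1))=0$, whence $H(1)$ is $2$-capable by the Proposition. Combining this with the forward direction yields the stated equivalence.

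The step that most requires care is the claim that a nonzero $Z^{*}_2(L)$ must contain $L^2$. I would justify it by recording, first, that $Z^{*}_2(L)$ is an ideal, being the image of the upper-central term $Z_2(F/[[R,F],F])$ under the epimorphism $\pi$, and second, that in a nilpotent Lie algebra every nonzero ideal intersects the centre; since the centre of $H(1)$ is exactly the one-dimensional $L^2$, that intersection is forced to be all of $L^2$. Beyond this structural observation, everything reduces to the two numerical values $5$ and $2$ supplied by Theorems \ref{th1} and \ref{ab}, so I expect no further obstacle.
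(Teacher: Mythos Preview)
Your proposal is correct and follows essentially the same argument as the paper: both directions are handled identically, using Corollary~\ref{ca11} for $m\geq 2$ and, for $m=1$, comparing $\dim\mathcal{M}^{(2)}(H(1))=5$ with $\dim\mathcal{M}^{(2)}(A(2))=2$ via Theorem~\ref{ti} to conclude $Z^{*}_2(H(1))=0$. Your write-up is in fact more careful than the paper's, since you make explicit why a nonzero $Z^{*}_2(L)$ must contain $L^2$ (nonzero ideal in a nilpotent Lie algebra meets the one-dimensional centre) and explicitly invoke the Proposition linking $Z^{*}_2(L)=0$ to $2$-capability, whereas the paper leaves these steps implicit.
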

\begin{proof} Let  $m\geq 2$, by Corollary \ref{ca11} $H(m)$ is not capable so it is not $2$-capable as well. Hence we may assume that $L\cong H(1)$. Let $I$ be an ideal of $L$ od dimension 1. Then $L/I$ is abelian of dimension 2, and hence $\mathrm{dim}~\mathcal{M}^{(2)}(L)=2$. On the other hands, Theorem \ref{th1} implies $\mathrm{dim}~\mathcal{M}^{(2)}(L)=5$, and Theorem
\ref{ti} deduces $\mathcal{M}^{(2)}(L)\rightarrow \mathcal{M}^{(2)}(L/I)$ can not be a monomorphism, as required.
\end{proof}

\end{document}